\newcommand\footnoteref[1]{\protected@xdef\@thefnmark{\ref{#1}}\@footnotemark}
\title{A note on the asymptotic expansion of the Lerch's transcendent}
\author[1]{Xing Shi Cai\thanks{This work is supported by the Knut and Alice Wallenberg Foundation and the \emph{Ministerio de Econom\'{\i}a y Competitividad} of the spanish government (MTM2017-83490-P). Email: \texttt{xingshi.cai@math.uu.se}}}
\author[2]{José L. López\thanks{Email: \texttt{jl.lopez@unavarra.es}}}
\affil[1]{Department of Mathematics, Uppsala University, Uppsala, Sweden}
\affil[2]{Departamento de Estad\'{\i}stica, Matemáticas e Informática and INAMAT, Universidad Pública de Navarra, Pamplona, Spain}
\newcommand{\Z}{{\mathbb Z}}
\newcommand{\N}{{\mathbb N}}
\newcommand{\C}{{\mathbb C}}
\newcommand\bigO[1]{{\mathcal{O}\!\left( #1 \right)}}
\newcommand{\polylog}{\mathrm{Li}}
\newcommand{\pochhammer}[2]{\left(#1\right)_{#2}}
\newcommand{\eqd}{\coloneqq}
\newtheorem{lemma}{Lemma}
\newtheorem{theorem}{Theorem}
\newtheorem{corollary}{Corollary}
\theoremstyle{definition}
\newtheorem{myRemark}{Remark}
\newtheorem*{myRemark*}{Remark}
\definecolor{webgreen}{rgb}{0,.5,0} 
\definecolor{Maroon}{cmyk}{0, 0.87, 0.68, 0.32}
\begin{document}

\maketitle

\begin{abstract}
In \cite{MR2086542}, the authors derived an asymptotic expansion of the Lerch's transcendent
\(\Phi(z,s,a)\) for large $\vert a\vert$, valid for $\Re a>0$, $\Re s>0$ and
$z\in\mathbb{C}\setminus[1,\infty)$. In this paper we study the special case $z\ge 1$ not covered in
\cite{MR2086542}, deriving a complete asymptotic expansion of the Lerch's transcendent
\(\Phi(z,s,a)\) for \(z > 1\) and $\Re s>0$ as $\Re a$ goes to infinity.  We also show that when
\(a\) is a positive integer, this expansion is convergent for $\Re z\ge 1$.  As a corollary, we get a full asymptotic
expansion for the sum \(\sum_{n=1}^{m} z^{n}/n^{s}\) for fixed \(z >1 \) as \(m \to \infty\). Some
numerical results show the accuracy of the approximation.
\end{abstract}

\begin{section}{Introduction}
The Lerch's transcendent (Hurwitz-Lerch zeta function) \cite[\S25.14(i)]{DLMFlerch} is defined by means of the power series
\begin{equation}
    {\Phi\left(z,s,a\right)=\sum_{n=0}^{\infty}\frac{z^{n}}{(a+n)^{s}}}, \hskip 2cm a \ne 0, -1, -2,\dots,
    \label{eq:Lerch}
\end{equation}
on the domain $|z| < 1$ for any $s\in\mathbb{C}$ or $|z|\le 1$ for $\Re s>1$.
For other values of the variables \(z, s, a\), the function \(\Phi(z,s,a)\) is defined by analytic continuation.  In particular \cite{MR2086542},
\begin{equation}\label{inte}
\Phi\left(z,s,a\right)=\frac{1}{\Gamma(s)}\int_0^\infty\frac{x^{s-1}e^{-ax}}{1-ze^{-x}}\,\mathrm{d}x, \hskip 5mm
\Re a>0, \hskip 2mm z\in\mathbb{C}\setminus[1,\infty) \hskip 2mm \text{ and}\hskip 2mm\Re s>0.
\end{equation}

This function was investigated by Erd\'{e}lyi \cite[\S 1.11, eq. 1]{bateman}. Although using a
different notation $z=e^{2\pi i x}$, it was previously introduced by Lerch \cite{lerch} and Lipschitz
\cite{lipschitz} in connection with Dirichlet's famous theorem on primes in arithmetic progression.
If $x\in \mathbb{Z}$, the Hurwitz-Lerch zeta function reduces to the meromorphic Hurwitz zeta
function $\zeta(s,a)$ \cite[\S 2.3, eq. 2]{sriva}, with one single pole at $s=1$. Moreover,
$\zeta(s,1)$ is nothing but the Riemann zeta function $\zeta(s)$.

Properties of the Lerch's transcendent have been studied by many authors. Among other results, we remark the
following ones. Apostol obtains functional relations for $\Phi\left(e^{2\pi i x},s,a\right)$ and gives an algorithm to compute $\Phi\left(e^{2\pi i x},-n,a\right)$ for $n\in\mathbb{N}$ in terms of a certain kind of generalized Bernoulli polynomials \cite{apostol}. The function $\Phi\left(e^{2\pi i x},s,a\right)$ is used in \cite{katsuradaii} to generalize a certain
asymptotic formula considered by Ramanujan. Asymptotic equalities for some weighted mean squares of $\Phi\left(e^{2\pi i x},s,a\right)$ are given in \cite{kluss}. Integral representations, as well as functional relations
and expansions for $\Phi(z,s,a)$ may be found in \cite[\S 2.5]{sriva}. See \citet{MR0058756} for further properties. Here we want to remark the following two important properties of the Lerch's transcendent valid for $x,z,s \in \C$, $m \in \N$ \cite[\S 25.14.3 and \S 25.14.4]{DLMFlerch}:
\begin{equation}
    \Phi\left(z,s,1\right)=\frac{1}{z}\mathrm{Li}_{s}\left(z\right):=\sum_{n=1}^{\infty}\frac{z^{n-1}}{n^{s}}, \hskip 3cm \vert z\vert<1,
    \label{eq:polylog:def}
\end{equation}
where \(\polylog_{s}(z)\) is the polylogarithm function \cite[\S 25.12]{DLMFlerch}, and
\begin{equation}
    \Phi\left(z,s,x\right)=z^{m}\Phi\left(z,s,x+m\right)+\sum_{n=0}^{m-1}\frac{z^{n}}{(x+n)^{s}},
    \qquad 
    -x\notin\N\cup\lbrace 0\rbrace
    .
    \label{eq:lerch:id}
\end{equation}

In particular, when $x=1$, the second property may be written in the form
\begin{equation}
    \eta(z,s,m)
    \eqd
    \sum_{n=1}^{m} \frac{z^{n}}{n^{s}} 
    = 
    \polylog_{s}(z)-
    {z^{m+1}} \Phi\left( z, s, m+1 \right).
    \label{eq:lerch:sum}
\end{equation}
The finite sum $\eta(z,s,m)$ for $z>1$ is of interest in the study of \emph{random records in
full binary trees} by \citet{janson04}. In a full binary tree, each node has two child nodes
and each level of the tree is full. Thus \(T_{m}\), a full binary tree of height \(m\), has
\(n=2^{m+1}-1\) nodes. In the random records model, each node in \(T_{m}\) is given a label chosen
uniformly at random from the set \(\{1,\dots,n\}\) without replacement. A node \(u\) is called a \emph{record} when
its label is the smallest among all the nodes on the path from \(u\) to the root node. Let \(h(u)\)
be the distance from \(u\) to the root.  Let \(X(T_{m})\) be the (random) number of records in
\(T_{m}\). Then it is easy to see that the expectation of \(X(T_{m})\) is simply
\begin{equation}
    \sum_{u \in T_{n}}
    \frac{1}{h(u)+1}
    =
    \sum_{i=0}^{m+1}
    \frac{2^{i}}{i+1}
    =
    \frac{\eta\left(2,1,m+1 \right)}{2}
    =
    \frac{2^{m+1}}{m}
    +
    \bigO{\frac{2^{m}}{m^2}}
    =
    \frac{n}{m}
    +
    \bigO{
        \frac{n}{m^{2}}
    }
    ,
    \label{eq:X:T}
\end{equation}
where the last step follows from elementary asymptotic computations \cite[Remark~1.3]{janson04}.
A generalization of random records, called \emph{random \(k\)-cuts}, requires a similar computation 
which boils down to finding an asymptotic expansion of \(\eta(2, b/k, m)\) for some \(k \in \N\)
and \(1 \le b \le k\) as \(m \to \infty\), see \cite[\S 5.3.1]{Cai010}. Or more generally, asymptotic expansions of the function
\begin{equation}\label{functF}
F(z,s,a):= \Phi (z,s,a)-\frac{\mathrm{Li}_s(z)}{z^a},
\end{equation}
that generalizes the function $\eta(z,s,m-1)$ defined in \eqref{eq:lerch:sum}, from integer to complex values of the variable $m$: $\eta(z,s,m-1)=-z^mF(z,s,m)$.
Complete asymptotic expansions, including error bounds, of $\Phi(z,s,a)$ for large $a$ have been
investigated in \cite{MR2086542}. In particular, for $\Re a>0$, $\Re s>0$ and
$z\in\mathbb{C}\setminus[1,\infty)$, we have that, for arbitrary $N\in\mathbb{N}$ \cite[Theorem
1]{MR2086542},
\begin{equation}
    \Phi\left( z,s,a \right)
    =
    \sum_{n=0}^{N-1}
    c_{n}(z)
    \frac{
        \pochhammer{s}{n}
    }{
        a^{n+s}
    }
    +
    \bigO{
        a^{-N-s}
    }
    ,
    \label{eq:lerch:asy}
\end{equation}
as \(|a| \to \infty\). In this formula \(\pochhammer{s}{n} \eqd s(s+1)\dots(s+n-1)\) is the
Pochhammer symbol, $c_0(z)=(1-z)^{-1}$ and, for $n=1,2,3,...$,
\begin{equation}
    c_{n}(z)
    \eqd
    \frac{
        (-1)^{n}
       \polylog_{-n}(z)
    }{
        n !
    }.
    \label{eq:xi}
\end{equation}
%
From the identities \eqref{eq:lerch:sum} and \eqref{eq:lerch:asy} we have that, for all \(N \in \N\), $z\notin[1,\infty)$ and $\Re s>0$ where expansion \eqref{eq:lerch:asy} is valid,
\begin{equation}
    \eta(z,s,m-1) = 
    \polylog_{s}(z)
    -
    \frac{z^{m}}{m^{s}}
    \left[
        \sum_{n=0}^{N-1}
        c_{n}(z)
        \frac{
            \pochhammer{s}{n}
        }{
            m^{n}
        }
        +
        \bigO{
            m^{-N}
        }
    \right]
    ,
    \label{eq:eta:asy}
\end{equation}
as \(m \to \infty\). Unfortunately, expansion \eqref{eq:lerch:asy} has not been proved for $z\in[1,\infty)$, and then, in principle, the above expansion of $ \eta(z,s,m-1)$ does not hold in the domain of the variable $z$ where the approximation of \(\eta(z, s, a)\) has a greater interest. Had the expansion \eqref{eq:lerch:asy} been proved for \(z \ge 1\), the asymptotic computations in \eqref{eq:X:T} would have become unnecessary and an arbitrarily precise approximation could be achieved automatically from \eqref{eq:eta:asy}. 

However, to our surprise, it seems that the expansion \eqref{eq:eta:asy} is still valid when \(z > 1\). An argument that supports this claim is the following. On the one hand, assuming for the moment that \eqref{eq:eta:asy} holds for \(z = 2\), then
\begin{align}
    &   
    \eta(2, -1, m) = \sum_{n=1}^{m} n 2^{n} = (m-1) 2^{m+1} + 2 + \bigO{m^{-N}}
    ,
    &
    N \in \N
    .
    \label{eq:eta:2:asy}
\end{align}
On the other hand, using summation by parts \cite[pp.~56]{MR1397498}, it is easy to see that
\begin{equation}
    \eta(2, -1, m) = \sum_{n=1}^{m} n 2^{n} = (m-1) 2^{m+1} + 2.
    \label{eq:eta:2}
\end{equation}
Thus expansion \eqref{eq:eta:asy} seems to be correct for $z=2$. Numerical experiments further suggest
that this is also true for other values of \(z > 1\).

Then, the purpose of this paper is to show that expansion \eqref{eq:eta:asy} holds for $z> 1$. More generally, to derive an expansion of $F(z,s,a)$ for large $\Re a$ with $\Re s>0$ and $z\ge 1$. 

\end{section}

\begin{section}{An expansion of  \texorpdfstring{$\Phi(z,s,a)$}{Phi(z,s,a)} for large
        \texorpdfstring{$\Re a$}{Re(a)} and \texorpdfstring{$z\ge 1$}{z >= 1}}
        
The main result of the paper is given in Theorem 1 below. In order to formulate Theorem 1, we need to consider the function
\begin{equation}
    f(z,x,a):=\frac{1-(ze^{-x})^{1-a}}{1-ze^{-x}},
    \label{eq:f}
\end{equation}
and its Taylor coefficients $C_n(z,a)$ at $x=0$. We also need the two following Lemmas.
        
\begin{lemma} For $a,z\in\mathbb{C}$ and $n=0,1,2,...$, 
\begin{equation}\label{Cn}
C_n(z,a):=c_n(z)-z^{1-a}p_n(z,a), \hskip 2cm p_n(z,a):=\sum_{k=0}^n\frac{c_{n-k}(z)}{k!}(a-1)^k,
\end{equation}
where, for $z\ne 1$, the coefficients $c_n(z)$ have been introduced in \eqref{eq:xi}: $c_0(z)=(1-z)^{-1}$ and, for $n=1,2,3,...$,
\begin{equation}
    c_{n}(z)
    \eqd
    \frac{
        (-1)^{n}
       \polylog_{-n}(z)
    }{
        n !
    }.
    \label{eq:xiz}
\end{equation}
For $n=1,2,3,...$, the coefficients $c_n(z)$ may be computed recursively in the form $nc_n(z)=-zc'_{n-1}(z)$. Observe that $p_n(z,a)$ are polynomials of degree $n$ in the variable $a$.

\noindent
For $z=1$, the definition \eqref{Cn} must be understood in the limit sense. More precisely, \(C_{0}(1,a)=1-a\) and, for $n=1,2,3,...$,
\begin{equation}
C_{n}(1,a) 
    =
    \dfrac{
        B_{n+1}-B_{n+1}(a-1)-(n+1) (a-1)^{n}
    }{
        (n+1)!
    }
    ,
    \label{eq:c:z1}
\end{equation}
where \(B_{n}\) are the \emph{Bernoulli numbers} and $B_n(a)$ the Bernoulli polynomials \cite[\S24.2]{DLMFbernu}.
\end{lemma}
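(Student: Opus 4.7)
The plan is to reduce every claim in the lemma to the single generating identity
\[
g(z,x) \eqd \frac{1}{1-ze^{-x}} = \sum_{n=0}^\infty c_n(z)\, x^n, \qquad z\neq 1,
\]
holding as a Taylor expansion around $x=0$. I will first establish it for $|z|<1$ by expanding $g$ as the geometric series $\sum_{k\ge 0} z^k e^{-kx}$, interchanging the two sums, and recognising the inner sum $\sum_{k\ge 0} k^n z^k$ as $\polylog_{-n}(z)$ for $n\ge 1$ and as $(1-z)^{-1}$ for $n=0$; this matches precisely the definition of $c_n(z)$ in \eqref{eq:xiz}. The identity then extends to all $z\neq 1$ by analytic continuation, since both $g$ and the coefficients $c_n$ are rational in $z$ with a single pole at $z=1$.

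With this in hand, the formula for $C_n(z,a)$ drops out from the elementary splitting
\[
f(z,x,a) = g(z,x) - z^{1-a} e^{(a-1)x}\, g(z,x),
\]
which uses the identity $(ze^{-x})^{1-a}=z^{1-a}e^{(a-1)x}$. I will expand $e^{(a-1)x}=\sum_{k\ge 0}(a-1)^k x^k/k!$ and take the Cauchy product with the series for $g$: its $n$-th Taylor coefficient is, by construction, the polynomial $p_n(z,a)$, so matching coefficients of $x^n$ on both sides yields $C_n(z,a)=c_n(z)-z^{1-a}p_n(z,a)$, and the assertion that $p_n$ has degree $n$ in $a$ is immediate. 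The recurrence $n c_n(z)=-z c_{n-1}'(z)$ then follows by noting that $g$ satisfies $\partial_x g = -z\,\partial_z g$ (both sides equal $-ze^{-x}(1-ze^{-x})^{-2}$), and by equating coefficients of $x^{n-1}$.

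The case $z=1$ requires separate treatment because each of $c_n(z)$ and $p_n(z,a)$ has a pole at $z=1$, although by the definition of $f$ their combination is regular there. Rather than computing the limit of the right-hand side of \eqref{Cn}, I will extract $C_n(1,a)$ directly from $f(1,x,a)$. Multiplying numerator and denominator by $e^x$ puts $f$ in the clean form
\[
f(1,x,a) = \frac{e^x - e^{ax}}{e^x - 1},
\]
after which the standard generating function $x e^{tx}/(e^x-1)=\sum_{n\ge 0} B_n(t)\, x^n/n!$ applied at $t=1$ and $t=a$ gives the coefficient of $x^n$ in $f(1,x,a)$ as $(B_{n+1}(1)-B_{n+1}(a))/(n+1)!$. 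For $n=0$, using $B_1(1)=1/2$ and $B_1(a)=a-1/2$, this reads $C_0(1,a)=1-a$; for $n\ge 1$, I combine $B_{n+1}(1)=B_{n+1}$ with the shift identity $B_{n+1}(a)=B_{n+1}(a-1)+(n+1)(a-1)^n$ to recover \eqref{eq:c:z1}. The only point requiring genuine care is the mild bookkeeping around $n=0,1$ caused by $B_1(1)\neq B_1$, which is precisely what forces $C_0(1,a)$ to be stated separately; beyond that, every step is a routine manipulation of convergent power series.
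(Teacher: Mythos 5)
Your proof is correct and follows essentially the same route as the paper's (much terser) argument: both obtain $C_n(z,a)=c_n(z)-z^{1-a}p_n(z,a)$ by multiplying the Taylor series of $e^{(a-1)x}$ and $(1-ze^{-x})^{-1}$, and both derive the $z=1$ case from the Bernoulli generating function together with the shift $B_{n+1}(a)=B_{n+1}(a-1)+(n+1)(a-1)^{n}$. The only difference is that you fill in details the paper leaves implicit, such as identifying $c_n(z)$ via the geometric series and the polylogarithm, and deducing the recursion $nc_n(z)=-zc'_{n-1}(z)$ from $\partial_x g=-z\,\partial_z g$.
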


\begin{proof}
Formulas \eqref{Cn}-\eqref{eq:xiz} may be derived by combining the Taylor expansions at $x=0$ of $e^{(a-1)x}$ and $(1-ze^{-x})^{-1}$. Formula \eqref{eq:c:z1} follows from the generating function of the Bernoulli polynomials $B_n(a)$ \cite[Eq.~7.81]{MR1397498},
\begin{equation}
    \frac{e^{(a-1)x}}{1-e^{-x}}
    =
    \sum_{n = 0}^{\infty} B_{n}(a) \frac{x^{n-1}}{n!}
    ,
    \label{eq:f:z1}
\end{equation}
the definition of the Bernoulli numbers $B_n=B_n(0)$,
and the Taylor expansion of \(1-e^{-x(1-a)}\) at $x=0$. The derivation of the recursion $nc_n(z)=-zc'_{n-1}(z)$ is straightforward (see \cite{MR2086542}).
\end{proof}

Apart from the explicit form of the coefficients $C_n(z,a)$ given above, we may compute them by means of the recurrence relation given in the following lemma.

\begin{lemma} For $z\ne 1$ we have that
$\displaystyle{C_0(z,a)=\frac{1-z^{1-a}}{1-z}}$
and, for $n=1,2,3,...$,
$$
C_{n}(z,a)= \frac{z}{1-z}\left[\sum_{k=0}^{n-1}\frac{(-1)^{n-k}}{(n-k)!}C_k(z,a)-\frac{(a-1)^n}{n!z^a}\right].
$$
For $z=1$ we have
$C_0(1,a)=1-a,$
and, for $n=1,2,3,...$, 
$$
\displaystyle{
C_{n}(1,a)= -\frac{(a-1)^{n+1}}{(n+1)!}-\sum_{k=0}^{n-1}\frac{(-1)^{n-k}}{(n+1-k)!}C_k(1,a).}
$$

\end{lemma}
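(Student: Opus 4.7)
The plan is to derive both recurrences directly from the functional equation
\[
f(z,x,a)\,(1-ze^{-x}) = 1 - z^{1-a} e^{(a-1)x},
\]
which follows immediately from the definition \eqref{eq:f}. Expanding the factor $ze^{-x}$ in its Taylor series at $x=0$ gives $1 - ze^{-x} = (1-z) - z\sum_{k\ge 1}(-x)^k/k!$, and similarly $1 - z^{1-a}e^{(a-1)x} = (1-z^{1-a}) - z^{1-a}\sum_{k\ge 1}(a-1)^k x^k/k!$. Substituting the formal Taylor series $f(z,x,a) = \sum_{n\ge 0} C_n(z,a)\, x^n$ on the left and multiplying out produces an identity of formal power series in $x$ whose coefficients can be read off one by one.

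For $z\ne 1$, comparing the constant term gives $C_0(z,a) = (1-z^{1-a})/(1-z)$. For $n\ge 1$, the coefficient of $x^n$ on the left is
\[
(1-z)\,C_n(z,a) - z\sum_{k=0}^{n-1} C_k(z,a)\,\frac{(-1)^{n-k}}{(n-k)!},
\]
while on the right it is $-z^{1-a}(a-1)^n/n!$. Solving for $C_n(z,a)$ and using $z^{1-a}/(1-z) = -z/((1-z)z^a)$ yields exactly the stated recurrence.

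For $z=1$ the leading factor $1-z$ vanishes, so I would substitute $z=1$ into the functional identity \emph{before} comparing coefficients. The identity becomes $f(1,x,a)(1-e^{-x}) = 1 - e^{(a-1)x}$, and since now $1-e^{-x}$ and $1-e^{(a-1)x}$ both start at order $x$, equating the coefficient of $x^{n+1}$ (rather than $x^n$) gives
\[
\sum_{k=0}^{n} C_k(1,a)\,\frac{(-1)^{n-k}}{(n+1-k)!} = -\frac{(a-1)^{n+1}}{(n+1)!}.
\]
Isolating the $k=n$ term (which is just $C_n(1,a)$) produces the claimed formula. The base case $C_0(1,a) = 1-a$ can be obtained either by the limit $\lim_{z\to 1}(1-z^{1-a})/(1-z)$ via L'Hôpital, or equivalently by comparing the coefficient of $x$ in the $z=1$ functional equation.

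The argument is essentially a bookkeeping exercise; no genuine obstacle arises. The only subtlety is the index shift in the $z=1$ case caused by the simultaneous vanishing of $1-ze^{-x}$ and $1-z^{1-a}e^{(a-1)x}$ at $x=0$, which is why the $z=1$ recurrence involves $(n+1-k)!$ in place of $(n-k)!$ and an extra term $(a-1)^{n+1}/(n+1)!$. Once this shift is handled, both recurrences fall out by straightforward coefficient matching.
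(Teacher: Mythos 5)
Your proof is correct and follows essentially the same route as the paper, which likewise substitutes the Taylor series of $e^{-x}$ and $f(z,x,a)=\sum_k C_k(z,a)x^k$ into the identity $[1-ze^{-x}]f(z,x,a)=1-(ze^{-x})^{1-a}$ and equates coefficients (you simply carry out the bookkeeping, including the index shift at $z=1$, more explicitly). The only blemish is the auxiliary identity you quote, which should read $z^{1-a}/(1-z)=z/\bigl((1-z)z^{a}\bigr)$ without the minus sign; the sign you actually use in the final recurrence is the correct one.
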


\begin{proof}
Replace
$$
e^{-x} = \sum_{k=0}^\infty\frac{(-x)^k}{k!} \hskip 1cm \text{and}\hskip 1cm f(z,x,a)=\sum_{k=0}^\infty C_k(z,a) x^k
$$
into the identity $[1-ze^{-x}]f(z,x,a)=1-(ze^{-x})^{1-a}$ and equate the coefficients of equal powers of $x$.
\end{proof}

\begin{theorem}
    \label{thm:lerch}
    For fixed \(N \in \N\), \(\Re a>1\) and \(\Re s >0\),
    \begin{equation}
        F(z,s,a)
        \eqd
        \Phi (z,s,a)-\frac{\mathrm{Li}_s(z)}{z^a}
        =
        \sum_{n=0}^{N-1}C_n(z,a)\frac{(s)_n}{a^{n+s}}+R_N(z,s,a),
        \label{eq:F:asy}
    \end{equation}
with $C_n(z,a)$ given in the previous lemmas and, for \(z> 1\),
  \begin{align}
        &
        R_{N}(z,s,a)
        =
        \bigO{(\Re a)^{1-N-s}+a z^{-\Re a}}
        ,
        &
        \Re a \to \infty
        .
        \label{eq:R}
    \end{align}
This means that expansion \eqref{eq:F:asy} has an asymptotic character for large $\Re a$ when $z>1$. Moreover, expansion \eqref{eq:F:asy} is convergent for \(a=m=2,3,4,...\) and \(z\ge 1\); i.e., $R_{N}(z,s,m)\to 0$ as $N\to\infty$ and
    \begin{equation}
        F(z,s,m)
        :=-\frac{1}{z^m}\sum_{k=1}^{m-1}\frac{z^k}{k^s}=
       \sum_{n=0}^{\infty}C_n(z,m)\frac{(s)_n}{m^{n+s}}, \hskip 2cm z\ge 1.
        \label{eq:F:conv}
    \end{equation}

    \end{theorem}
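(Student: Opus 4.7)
The plan is a Watson-type analysis of the integral representation
\[
F(z,s,a) = \frac{1}{\Gamma(s)}\int_0^\infty x^{s-1} e^{-ax} f(z,x,a)\,dx,
\]
which I first establish for $z > 1$, $\Re a > 1$, $\Re s > 0$. For $|z|<1$ this is immediate from \eqref{inte} and the additive decomposition $(1-ze^{-x})^{-1} = f(z,x,a) + (ze^{-x})^{1-a}/(1-ze^{-x})$, since the second summand integrates (after pulling out $z^{1-a}$ and expanding $(1-ze^{-x})^{-1}$ as a geometric series in $ze^{-x}$) to exactly $z^{-a}\mathrm{Li}_s(z)$. The right-hand side is analytic in $z$ on $\mathbb{C}\setminus(-\infty,0]$: the singularity of $f$ at $x=\log z$ is removable and $e^{-ax}f(z,x,a)\sim -z^{1-a}e^{-x}$ is integrable at infinity. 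Analytic continuation in $z$ past the classical cut $[1,\infty)$ therefore extends the identity to $z>1$.

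By Lemma 1, $f(z,\cdot,a)$ is analytic in the disc $|x|<\rho := \sqrt{(\log z)^2+4\pi^2}$ (the nearest non-removable singularities being at $\log z \pm 2\pi i$), with Taylor coefficients $C_n(z,a)$. I would split the integral as $\int_0^\infty = \int_0^T + \int_T^\infty$: on $[0,T]$, apply Taylor's theorem with remainder, so that the polynomial part --- extended to $\int_0^\infty$ via $\int_0^\infty x^{n+s-1}e^{-ax}\,dx = \Gamma(n+s)/a^{n+s}$ --- reproduces $\sum_{n=0}^{N-1} C_n(z,a)(s)_n/a^{n+s}$ plus exponentially small corrections from $[T,\infty)$. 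Cauchy's estimate $|C_n(z,a)|\le M(r)/r^n$ on a circle $|x|=r$ with $r<\log z$ gives $M(r)=\bigO{1}$ as $\Re a\to\infty$ (since $|(ze^{-x})^{1-a}|\le(ze^{-r})^{1-\Re a}e^{|\Im a|r}$ decays when $ze^{-r}>1$ and $\Re a>1$), yielding a Taylor remainder of order $\bigO{(\Re a)^{-N-\Re s}}$. For the tail $\int_T^\infty x^{s-1}e^{-ax}f\,dx$, use $f = g - h$ with $g = (1-ze^{-x})^{-1}$ and $h = z^{1-a}e^{(a-1)x}g$: the $g$-contribution is exponentially small in $\Re a$, while the $h$-contribution, after the cancellation $e^{-ax}\cdot e^{(a-1)x}=e^{-x}$, reduces to $z^{1-a}\int_T^\infty x^{s-1}e^{-x}g(z,x)\,dx = \bigO{z^{1-\Re a}}$. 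Combining yields $R_N = \bigO{(\Re a)^{1-N-s} + az^{-\Re a}}$.

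For the convergence claim when $a = m\in\{2,3,\dots\}$, the key is that $(ze^{-x})^{1-m}$ is single-valued and the algebraic identity $e^{(m-1)x} - z^{m-1} = (e^x - z)\sum_{k=0}^{m-2}e^{kx}z^{m-2-k}$ collapses $f(z,x,m) = -\sum_{j=1}^{m-1}z^{-j}e^{jx}$, an entire function of $x$ (and $-\sum_{j=1}^{m-1}e^{jx}$ at $z = 1$, consistent with the Bernoulli form of $C_n(1,m)$ in Lemma 1). The bound $|C_n(z,m)|\le \sum_{j=1}^{m-1}z^{-j}j^n/n!$ combined with the convergence of $\sum_n |(s)_n|((m-1)/m)^n/n!$ certifies absolute convergence and justifies Fubini; the closed form $\sum_n (s)_n y^n/n! = (1-y)^{-s}$ then evaluates
\[
\sum_{n=0}^\infty \frac{C_n(z,m)(s)_n}{m^{n+s}} = -\sum_{j=1}^{m-1}\frac{z^{-j}}{(m-j)^s} = -\frac{1}{z^m}\sum_{k=1}^{m-1}\frac{z^k}{k^s} = F(z,s,m),
\]
proving \eqref{eq:F:conv}. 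I expect the main obstacle to be the tension between choosing the Cauchy radius $r<\log z$ (so that $M(r)$ stays uniformly bounded as $\Re a\to\infty$) and wanting the splitting point $T$ close to or beyond $\log z$ (to get a clean $z^{-\Re a}$ factor in the tail); reconciling these forces the relatively loose $\bigO{(\Re a)^{1-N-s}}$ rate rather than the $\bigO{(\Re a)^{-N-\Re s}}$ one might naively hope for.
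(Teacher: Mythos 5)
Your overall strategy---the same integral representation for $F$, a Watson's-lemma treatment of the Taylor expansion of $f$ at $x=0$, and the collapse of $f(z,x,m)$ to a finite exponential sum for integer $a$---is the paper's strategy, and both your opening paragraph and your convergence argument for $a=m\ge 2$ are sound (your closed-form evaluation $\sum_n C_n(z,m)(s)_n m^{-n-s}=-\sum_{j=1}^{m-1}z^{-j}(m-j)^{-s}$ is in fact a more explicit finish than the paper's interchange of sum and integral). The gap is in the tail estimate. Having chosen $T<r<\log z$ (which you are forced to do so that your Cauchy estimate gives $M(r)=\bigO{1}$ as $\Re a\to\infty$), the interval $[T,\infty)$ contains the point $x=\log z$, where $g=(1-ze^{-x})^{-1}$ has a simple pole. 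Consequently $\int_T^\infty x^{s-1}e^{-ax}g\,\mathrm{d}x$ and $\int_T^\infty x^{s-1}e^{-ax}h\,\mathrm{d}x$ are each divergent, and the claims ``the $g$-contribution is exponentially small'' and ``the $h$-contribution is $\bigO{z^{1-\Re a}}$'' are statements about integrals that do not exist: the split $f=g-h$ cannot be integrated term by term across the removable singularity. This is not cosmetic, because the behaviour of $f$ near $x=\log z$ is precisely what generates the factor $a$ in \eqref{eq:R}: one has $f(z,\log z,a)=1-a$, so on a fixed neighbourhood of $\log z$ the integrand is of size $|a|\,z^{-\Re a}$, which produces the $\bigO{a z^{-\Re a}}$ term. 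Your closing diagnosis (that the loss of sharpness comes from the tension between $r$ and $T$) locates the difficulty in the wrong place.

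The repair is to bound $f$ as a single object near $x=\log z$ rather than through $g$ and $h$ separately. The paper does this by writing $a=b+\beta$ with $b=\lfloor\Re a\rfloor$ and using the finite geometric sum $\bigl(1-(ze^{-w})^{1-b}\bigr)/(1-ze^{-w})=-\sum_{k=1}^{b-1}(e^{w}/z)^{k}$, which removes the denominator altogether and yields the global bound $|f(z,w,a)|\le M|a|\bigl[e^{\Re w}+z^{1-\Re a}e^{(\Re a-1)\Re w}\bigr]$ on a whole strip $U$ around $[0,\infty)$; this is then fed into the Cauchy integral formula for the Taylor remainder $r_N(z,x,a)$ over a loop enclosing $0$ and $x$, giving a remainder bound valid for all $x\ge 0$ simultaneously and avoiding any splitting of the range of integration. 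Your argument goes through if you either adopt that geometric-sum identity on the tail, or keep your $g-h$ split only outside a fixed neighbourhood of $\log z$ and use the continuity bound $|f|\le C|a|$ (from the removable singularity) inside it; either way the extra factor $a$ appears, matching \eqref{eq:R}.
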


\begin{proof}
Using the integral representation (2) of $\Phi (z,s,a)$ given in \cite{MR2086542} and the integral
representation \cite[\S25.12.11]{DLMFlerch} of the polylogartithm,
$$
\mathrm{Li}_s(z)=\frac{z}{\Gamma(s)}\int_0^\infty\frac{x^{s-1}}{e^x-z}\,\mathrm dx,
$$
we find the following integral representation of the function $F(z,s,a)$ defined in \eqref{functF}:
\begin{equation}
    F(z,s,a):=\Phi (z,s,a)-\frac{\mathrm{Li}_s(z)}{z^a}=\frac{1}{\Gamma(s)}\int_0^\infty x^{s-1}e^{-ax}f(z,x,a)\,\mathrm dx,
    \label{eq:F}
\end{equation}
valid for $\Re a>0$, $\Re s>0$ and $z\in\mathbb{C}$, with $f(z,x,a)$ given in \eqref{eq:f}.
In principle, the left hand side of \eqref{eq:F} is an analytic function of $z$ in the disk $\vert z\vert<1$. Then, the right hand side of this equation defines the analytic continuation of $F(z,s,a)$ in the variable $z$ to the cut complex plane $\mathbb{C}\setminus(-\infty,0]$. 

The function $f(z,x,a)$ has the following Taylor expansion at $x=0$:
\begin{equation}\label{taylor}
f(z,x,a)=\sum_{k=0}^{n-1}C_k(z,a)x^k+r_n(z,x,a),
\end{equation}
where the coefficients $C_n(z,a)$ are given in \eqref{Cn}-\eqref{eq:xiz}-\eqref{eq:c:z1}. Replacing the function $f(z,x,a)$ in the integral \eqref{eq:F} by its Taylor expansion \eqref{taylor}, and interchanging sum and integral, we obtain \eqref{eq:F:asy} with
\begin{equation}\label{boundR}
R_n(z,s,a):=\frac{1}{\Gamma(s)}\int_0^{\infty} x^{s-1}e^{-ax}r_n(z,x,a)\,\mathrm dx.
\end{equation}
Using the Cauchy integral formula for the remainder $r_n(z,x,a)$ we find
\begin{equation}\label{remain}
r_n(z,x,a)=\frac{x^n}{2\pi i}\int_C\frac{f(z,w,a)}{(w-x)w^n}\,\mathrm{d}w, \hskip 2cm
f(z,w,a):=\frac{1-(ze^{-w})^{1-a}}{1-ze^{-w}},
\end{equation}
where we choose $C$ to be a closed loop that encircles the points $w=0$ and $w=x$, it is
traversed in the positive direction, and is inside the region $U:=\lbrace w\in\mathbb{C}, \Re w>-W$,
$\vert \Im w\vert<W\rbrace$, for some arbitrary but fixed $W \in (0,2\pi)$. ($U$ is an infinity rectangular region around the
positive real line $[0,\infty)$ of width $2W$ (see \cite{MR2086542} for further details). 
\autoref{fig:C} gives an example of \(U\) and \(C\).

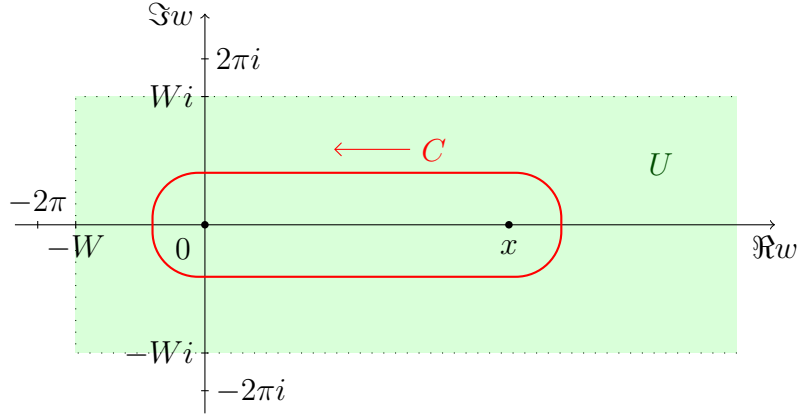
\begin{figure}[ht]
    \begin{center}
        \begin{tikzpicture}[scale=1] 

            \pgfmathsetmacro{\W}{1.7}
            \pgfmathsetmacro{\Wend}{7}
            \pgfmathsetmacro{\twopi}{2.2}

            \fill[green!15!white] (-\W,-\W) -- (-\W,\W) -- (\Wend,\W) -- (\Wend,-\W);
            \draw[loosely dotted,green!10!black] (\Wend,-\W) -- (-\W,-\W) -- (-\W,\W) -- (\Wend,\W);

            \draw[->] (-2.5,0) -- (7.5,0) node[anchor=north] {$\Re w$};
            \draw[->] (0,-2.5) -- (0,2.8) node[anchor=east] {$\Im w$};

            \node[text=black!70!green] at (6,0.8) {$U$};

            \node[anchor=east] at (0,\W) {$W i$};
            \node[anchor=east] at (0,-\W) {$-W i$};
            \draw (-0.05,\W)--(0.05,\W);
            \draw (-0.05,-\W)--(0.05,-\W);
            \node[anchor=north] at (-\W,-0) {$-W$};
            \draw (-\W,-0.05)--(-\W,0.05);

            \node[anchor=west] at (0,\twopi) {$2 \pi i$};
            \node[anchor=west] at (0,-\twopi) {$-2 \pi i$};
            \draw (-0.05,\twopi)--(0.05,\twopi);
            \draw (-0.05,-\twopi)--(0.05,-\twopi);
            \node[anchor=south] at (-\twopi,-0) {$-2 \pi$};
            \draw (-\twopi,-0.05)--(-\twopi,0.05);

            \node[circle,fill,inner sep=1pt,label=below:$x$] at (4,0) (nx) {};
            \node[circle,fill,inner sep=1pt,label=below left:$0$] at (0,0) (n0) {};

            \node[draw,thick, fit=(nx) (n0),inner sep=18pt, label={[red]above right:$C$},rounded
            corners=0.6cm,red] {};

            \draw[arrows={-angle 90},red] (2.7,1)--(1.7,1);

        \end{tikzpicture}
    \end{center}
    \caption{The integration loop \(C\) and the region \(U\)}\label{fig:C}
\end{figure}

The function $f(z,w,a)$ is continuous in the variable $w$ for $w\in U$. The singularities of this
function are $w=\log z+2i\pi n$, $n\in \Z\setminus\lbrace 0\rbrace$ and are located outside $U$.
Define $b:=\lfloor\Re a\rfloor$ and $\beta:=a-b$ and write
\begin{equation}
\begin{aligned}\label{decom}
f(z,w,a)&= 
\frac{1-(ze^{-w})^{1-a}}{1-ze^{-w}}=\frac{1-(ze^{-w})^{-\beta}+(ze^{-w})^{-\beta}-(ze^{-w})^{1-b-\beta}}{1-ze^{-w}}
\\ &= \frac{1-(ze^{-w})^{-\beta}}{1-ze^{-w}}+(ze^{-w})^{-\beta}\frac{1-(ze^{-w})^{1-b}}{1-ze^{-w}}.
\end{aligned}
\end{equation}
We have that
\begin{equation}
   \frac{1-(ze^{-w})^{1-b}}{1-ze^{-w}}
    =\sum_{k=0}^{b-2}\frac{(ze^{-w})^{-k}-(z e^{-w})^{-k-1}}{1-ze^{-w}}
    = -\sum_{k=1}^{b-1}\left(\frac{e^w}{z}\right)^k.
    \label{eq:f:b}
\end{equation}
The following bounds are valid for $w\in U$ and a certain constant $M_0>0$ independent of $b$ and $\Re w$:
\begin{equation}\label{bound}
    \left\{
        \begin{array}{lll}
            \displaystyle{
                \left\vert \frac{1-(ze^{-w})^{-\beta}}{1-ze^{-w}}\right\vert,
                \hskip 5mm \left\vert(ze^{-w})^{-\beta}\right\vert 
            } 
            & \le M_0\, e^{\Re \beta\Re w}, 
            & \text{for} \hskip 2mm  0\le\Re\beta<1, \\
            \displaystyle{\left\vert\sum_{k=1}^{b-1}\left(\frac{e^w}{z}\right)^k\right\vert} 
            &
            \le \displaystyle{(b-1)\frac{e^{\Re w}}{z}}, 
            &\text{for} \hskip 2mm \Re w\le\log z, \\
            \displaystyle{
                \left\vert\sum_{k=1}^{b-1}\left(\frac{e^w}{z}\right)^k\right\vert} 
            &\le \displaystyle{(b-1)\left(\frac{e^{\Re w}}{z}\right)^{b-1}},
            &\text{for}\hskip 2mm \Re w\ge\log z.
        \end{array}
    \right.
\end{equation}
%
Therefore, from \eqref{decom}, \eqref{eq:f:b} and \eqref{bound} we have that, for any $w\in U$ and $\Re a>1$,
$$
\vert f(z,w,a)\vert\le M\vert a\vert[e^{\Re w}+ z^{1-\Re a}e^{(\Re a-1)\Re w}],
$$
for a certain constant $M>0$ independent of $\Re a$ and $\Re w$.
The path $C$ in \eqref{remain} may be chosen in such a way that $\Re w\le x+1/\Re a$. Then, from \eqref{remain}, we find the following bound for the remainder $r_n(z,x,a)$:
$$
\vert r_n(z,x,a)\vert\le M_n \vert a\vert x^n[e^{x}+ z^{1-\Re a}e^{x(\Re a-1)}], \hskip 1cm \text{for}\hskip 5mm x\ge 0,
$$
where $M_n$ is a certain positive constant that depends on the geometry of the path $C$ chosen in \eqref{remain} and $\Im a$ but not on $\Re a$. Then, from \eqref{boundR},
\begin{equation}
    \vert R_n(z,s,a)\vert\le\frac{M_n\vert a\vert}{\vert\Gamma(s)\vert}\int_0^{\infty} x^{n+s-1}[e^{(1-\Re a)x}+ z^{1-\Re a}e^{-x}]\,\mathrm dx=\mathcal{O}((\Re a)^{1-n-s}+a\, z^{1-\Re a}).
    \label{eq:R:1}
\end{equation}
This proves \eqref{eq:R} and the asymptotic character of the expansion \eqref{eq:F:asy} for large $\Re a$ when $z>1$.

Finally, we prove formula \eqref{eq:F:conv}. When $a=m\in\lbrace 2,3,4,...\rbrace$, the Taylor coefficients $C_n(z,m)$ of $f(z,x,m)$ at $x=0$ are given in \eqref{Cn}-\eqref{eq:xiz}-\eqref{eq:c:z1} with $a=m$. But we may derive a simpler formula for $C_n(z,m)$. We have that
\begin{equation}
    f(z,x,m) 
    :=\frac{1-(ze^{-x})^{1-m}}{1-ze^{-x}}
    =\sum_{k=0}^{m-2}\frac{(ze^{-x})^{-k}-(z e^{-x})^{-k-1}}{1-ze^{-x}}
    = -\sum_{k=1}^{m-1}\frac{e^{kx}}{z^k}.
    \label{eq:f:m}
\end{equation}
Then, the Taylor coefficients $C_n(z,m)$ of $f(z,x,m)$ at $x=0$ are the sum of the Taylor coefficients of $z^{-k}e^{kx}$, that is,
$$
C_n(z,m)=-\frac{1}{n!}\sum_{k=1}^{m-1}\frac{k^n}{z^k}=\frac{1}{n!}\left[z^{-m}\Phi
    (z^{-1},-n,m)-\mathrm{Li}_{-n}(z^{-1})\right].
$$
Since
$$
\vert C_n(z,m)\vert=\frac{1}{n!}\sum_{k=1}^{m-1}\frac{k^n}{z^k}\le
\frac{(m-1)^n}{n!}\sum_{k=1}^{m-1}\frac{1}{z^k}\le\frac{(m-1)^{n+1}}{n!},
$$
we have that
$$
\sum_{n=0}^\infty\vert C_n(z,m)\vert\int_0^\infty x^{n+\Re s-1}e^{-mx} dx\le
\frac{m-1}{m^{\Re s}}\sum_{n=0}^\infty\frac{\Gamma(n+\Re s)}{n!}\left(\frac{m-1}{m}\right)^n<\infty.
$$
Therefore, when $a=m\ge 2$, we may replace $f(z,x,m)$ into the integral \eqref{eq:F} by its Taylor expansion \eqref{taylor} and interchange sum and integral. This proves \eqref{eq:F:conv}.
\end{proof}

\begin{corollary}
    \label{cor:eta}
    For fixed \(N \in \N\), \(z>1\), $m\in\mathbb{N}$ and \(\Re s>0\),
    \begin{equation}
        \label{eq:eta:asy:1}
        \eta(z,s,m-1) = 
        -
        \frac{z^{m}}{m^{s}}
        \left[
            \sum_{n=0}^{N-1}
            \left[c_n(z)+z^{1-m}p_n(z,m)\right]
            \frac{
                \pochhammer{s}{n}
            }{
                m^{n}
            }
            +
            \bigO{m^{1+s-N}+m^{s+1} z^{-m}}
        \right]
        ,
    \end{equation}
    as \(m \to \infty\).   Moreover, for $z\ge 1$,
    \begin{equation}
        \label{eq:eta:cov}
        \eta(z,s,m-1) = 
        -
        \frac{z^{m}}{m^{s}}
            \sum_{n=0}^{\infty}
            \left[c_n(z)+z^{1-m}p_n(z,m)\right]
            \frac{
                \pochhammer{s}{n}
            }{
                m^{n}
            }
        .
    \end{equation}
\end{corollary}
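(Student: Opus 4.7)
The plan is to deduce Corollary~\ref{cor:eta} directly from Theorem~\ref{thm:lerch} by combining it with the elementary identity
$$\eta(z,s,m-1) = -z^{m}F(z,s,m),$$
which is already recorded immediately after \eqref{functF} in the introduction. This identity is itself a one-line consequence of \eqref{eq:lerch:sum} (with $m$ replaced by $m-1$) together with the definition \eqref{functF} of $F$, so nothing new has to be proved to obtain it.

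Granting this identity, the proof is essentially a substitution. First I would take the expansion \eqref{eq:F:asy} of Theorem~\ref{thm:lerch} at $a=m$, insert the closed form $C_n(z,m) = c_n(z) - z^{1-m}p_n(z,m)$ supplied by Lemma~1, multiply through by $-z^m$, and pull out the common prefactor $z^m/m^s$. What is left inside the brackets is then precisely the sum appearing in \eqref{eq:eta:asy:1} (up to the sign convention on $p_n$), plus $m^s R_N(z,s,m)$. The remainder bound \eqref{eq:R} yields $m^s R_N(z,s,m) = \bigO{m^{1-N} + m^{s+1}z^{-m}}$, which is dominated by the stated $\bigO{m^{1+s-N} + m^{s+1}z^{-m}}$ whenever $\Re s>0$, so the first half of the corollary follows.

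For the convergent expansion \eqref{eq:eta:cov} valid for $z\ge 1$ and integer $m\ge 2$, I would simply invoke the convergence clause \eqref{eq:F:conv} of Theorem~\ref{thm:lerch}: since $R_N(z,s,m)\to 0$ as $N\to\infty$ in that regime, letting $N\to\infty$ in $\eta(z,s,m-1) = -z^{m}F(z,s,m)$ converts the finite expansion into a convergent series. There is no real obstacle anywhere in this argument, because the hard work (integral representation of $F$, Cauchy bound on the Taylor remainder of $f(z,x,a)$, and the summability estimate at integer $a=m$) has already been carried out in the proof of Theorem~\ref{thm:lerch}. The only small thing requiring attention is bookkeeping: making sure the sign and indexing of $p_n(z,m)$ in the displayed formulas of the corollary are matched consistently with the definitions in \eqref{Cn}.
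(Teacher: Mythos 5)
Your proposal is correct and is essentially the paper's own (implicit) argument: the corollary is stated without a separate proof precisely because it follows from Theorem~\ref{thm:lerch} at $a=m$ via the identity $\eta(z,s,m-1)=-z^{m}F(z,s,m)$ recorded after \eqref{functF}, with the error term $m^{s}R_{N}(z,s,m)=\bigO{m^{1-N}+m^{1+s}z^{-m}}$ absorbed into the stated $\bigO{m^{1+s-N}+m^{s+1}z^{-m}}$ since $\Re s>0$, and with \eqref{eq:eta:cov} coming from the convergence clause \eqref{eq:F:conv}. You are also right to flag the sign as the only delicate bookkeeping point: since Lemma~1 gives $C_{n}(z,m)=c_{n}(z)-z^{1-m}p_{n}(z,m)$, the bracket in \eqref{eq:eta:asy:1} should carry a minus sign in front of $z^{1-m}p_{n}(z,m)$, so the $+$ printed there is an inconsistency in the statement rather than a gap in your argument.
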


\end{section}

\begin{section}{Final remarks and numeric experiments}

\begin{myRemark}
The integral representation \eqref{inte} of $\Phi(z,s,a)$ is not valid for $z\in[1,\infty)$ because of
the pole of the integrand at $x=\log z$. This pole is removed by the subtraction of the function
$x^{s-1}(e^x-z)^{-1}$ to the integrand. We obtain in this way the integral representation
\eqref{eq:F} of the function $F(z,s,a):=\Phi(z,s,a)-z^{-a}\mathrm{Li}_s(z)$, free of the pole $x=\log z$ and valid for $z\in\mathbb{C}\setminus(-\infty,0]$.
\end{myRemark}

\begin{myRemark}
    Since \(f(z,x,0)\equiv 1\), we have \(C_{0}(z,0)=1\) and \(C_{n}(z,0) \equiv 0\) for $n=1,2,3,...$. Thus by \eqref{Cn}, 
    for all \(n \in \N\) and \(z \ne 1\),
    \begin{equation}
        c_{n}(z)=\frac{z}{1-z} \sum_{k=0}^{n-1} \frac{(-1)^{n-k}c_{k}(z)}{(n-k)!},
        \label{eq:recur:eta:4}
    \end{equation}
    which is equivalent to
    \begin{equation}
        \polylog_{-n}(z) = 
        \frac{z}{(1-z)^2}
        +
        \frac{z}{1-z} \sum_{k=1}^{n-1} \binom{n}{k} \polylog_{-k}(z).
        \label{eq:recur:Li}
    \end{equation}
\end{myRemark}

\begin{myRemark}
Observe that the terms of the expansion \eqref{eq:F:asy} are not a pure Poincar\'e expansion in the asymptotic sequence $a^{-k-s}$, as the coefficients $C_k(z,a)=c_k(z)+z^{1-a}p_k(z,a)$, $z\ge 1$, depend on $a$ ($p_k(z,a)$ is a polynomial of degree $k$ in $a$). For $z>1$ these coefficients are separable and then we may write the expansion \eqref{eq:F:asy} in the form
\begin{equation}
    \begin{aligned}
        F(z,s,a)
        &
        =\sum_{k=0}^{n-1}c_k(z)\frac{(s)_k}{a^{k+s}}+z^{1-a}\sum_{k=0}^{n-1}p_k(z,a)\frac{(s)_k}{a^{k+s}}+R_n(z,s,a)
        \\
        &
        =\sum_{k=0}^{n-1}c_k(z)\frac{(s)_k}{a^{k+s}}+\mathcal{O}((\Re a)^{1-n-s}+az^{1-\Re a}).
    \end{aligned}
    \label{eq:F:asy:2}
\end{equation}
%
The first expansion is the expansion \eqref{eq:F:asy} of $\Phi(z,a,s)$, valid for $z\notin[1,\infty)$. The second one is an
exponentially small correction; when $z$ is very large, it is a small correction, but when $z$ is close to 1, it is not negligible.
\end{myRemark}

\begin{myRemark}
Using the summation by parts formula \cite[\S2.10.9]{DLMFaa}, we have that
\begin{equation}
    \begin{aligned}
        \eta(z,s,m)
        &
        =
        \frac{z^{m+1}}{z-1}
        \frac{1}{(m+1)^{s}}
        -
        \frac{z}{z-1}
        +
        \frac{z}{z-1}
        \sum_{k=1}^{m}
        \left(  
            1-\left( 1+k^{-1} \right)^{-s}
        \right)
        \frac{z^{k}}{k^{s}}
        \\
        &
        =
        \frac{z^{m+1}}{z-1}
        \frac{1}{(m+1)^{s}}
        -
        \frac{z}{z-1}
        +
        \frac{z}{z-1}
        \sum_{k=1}^{m}
        \left(  
            \sum_{n=1}^{\infty}
            \frac{\pochhammer{s}{n}(-1)^{n-1}}{n!}
            \frac{1}{k^{n}}
        \right)
        \frac{z^{k}}{k^{s}}
        \\
        &
        =
        \frac{z^{m+1}}{z-1}
        \frac{1}{(m+1)^{s}}
        -
        \frac{z}{z-1}
        +
        \frac{z}{z-1}
        \sum_{n=1}^{\infty}
        \frac{\pochhammer{s}{n}(-1)^{n-1}}{n!}
        \eta(z,s+n,m)
        .
    \end{aligned}
    \label{eq:by:parts}
\end{equation}
Thus, expansion \eqref{eq:eta:asy} could also be proved by induction on \(N\) using the above identity.
\end{myRemark}

\begin{myRemark} Note that, for \(z=1\),
    \begin{equation}
        -F(1,s,m)
        =
        \eta\left(1, s, m-1 \right)
        =
        \sum_{n=1}^{m-1} \frac{1}{n^{s}}
        =
        \sum_{n=1}^{\infty} \frac{1}{n^{s}}
        -
        \sum_{n=m}^{\infty} \frac{1}{n^{s}}
        =
        \zeta(s)-\zeta(s,m)
        ,
        \label{eq:harm}
    \end{equation}
    where \(\zeta(s)\) denotes the \emph{Riemann zeta function} \cite[\S25.2]{DLMFlerch} and
    \(\zeta(s,m)\) denotes the \emph{Hurwitz zeta function} \cite[\S25.11]{DLMFlerch}.
    Thus \eqref{eq:F:conv} gives a new series representation of \(\zeta(s,m)\) for \(m \in \N\), as
    \begin{equation}
        \begin{aligned}
            \zeta(s,m) 
            &
            = 
            \zeta(s)+\sum_{k=0}^\infty C_{k}(1,m) \frac{\pochhammer{s}{k}}{m^{k+s}}
            \\
            &
            =
            \zeta(s)+\frac{1}{m^{s}} 
            \left( 
                1-m
                +
                \sum_{k=1}^\infty 
                \dfrac{
                    B_{k+1}-B_{k+1}(m-1)-(k+1) (m-1)^{k}
                }{
                    (k+1)!
                }
                \frac{\pochhammer{s}{k}}{m^{k}}
            \right)
            \\
            &
            =
            \zeta(s)+\frac{1}{m^{s}} 
            \left( 
                2-m-m^{s}
                +
                \sum_{k=1}^\infty 
                \dfrac{
                    B_{2k}
                }{
                    (2k)!
                }
                \frac{\pochhammer{s}{2k-1}}{m^{2k-1}}
                -
                \sum_{k =1}^\infty 
                \dfrac{
                    B_{k+1}(m-1)
                }{
                    (k+1)!
                }
                \frac{\pochhammer{s}{k}}{m^{k}}
            \right)
            .
        \end{aligned}
        \label{eq:hurwitz}
    \end{equation}
    On the other hand, using the \emph{Euler-Maclaurin's summation formula} \cite[\S9.5]{MR1397498}, or the asymptotic expansion of
    \(\zeta(s,m)\) in \cite[Eq.~25.11.43]{DLMFlerch}, we have, for \(s > 1\),
    \begin{equation}
        F(1,s,m)
        =
        -\zeta(s)
        +
        \frac{m^{1-s}}{s-1}
        +\frac{m^{-s}}{2}
        +
        \sum_{k=1}^{n}
        \frac{B_{2 k}}{(2k)!}
       \frac{ \pochhammer{s}{2 k-1}}{m^{2 k+s-1}}
        +
        \hat R_{n}(s,m)
        ,
        \label{eq:F:z1:asy}
    \end{equation}
    with
    \[
        \left|\hat R_{n}(s,m)\right| \le
            \frac{
                \vert B_{2 n + 2}\vert
            }{
                (2 n + 2) !
            }
            \frac{\vert\pochhammer{s}{2n+1}\vert}{m^{2n+s+1}} 
        .
    \]
    %
\end{myRemark}

\bigskip
\noindent
The following tables and pictures show some numerical experiments about the accuracy of the
approximations given in Theorem 1. In the tables we compute the absolute value of the relative error $\bar R_n(z,s,a)$ in the approximation \eqref{eq:F:asy}, defined in the form
$$
\bar R_n(z,s,a):=1-\frac{\sum_{k=0}^{n-1}C_k(z,a)\dfrac{(s)_k}{a^{k+s}}}{F(z,s,a)}.
$$
In \autoref{table1} and \autoref{figure2} we evaluate the Lerch's transcendent, the Polylogarithm and all the approximations with the symbolic manipulator {\it Wolfram Mathematica 10.4}.
%

%
%
%
%

\begin{table}[ht]
    \setlength{\extrarowheight}{5pt}
    \small
    \centering
    \begin{minipage}{0.48\textwidth}
        \centering
        \begin{tabular}{|c|c|c|c|}
            \multicolumn{4}{c}{$z= 2,\, s=1$}
            \\ \hline $n$ & $a=5$ & $a=10$ & $a=20$
            \\ \hline
            $5$ &7.87e-2& 2.22e-2 & 6.21e-4
            \\ \hline
            $10$ &2.13e-2& 7.55e-3 & 7.36e-5
            \\ \hline
            $15$ &6.69e-3& 3.68e-3 & 3.24e-5
            \\ \hline
        \end{tabular}
    \end{minipage}
    \hfill
    \begin{minipage}{0.48\textwidth}
        \centering
        \begin{tabular}{|c|c|c|c|}
            \multicolumn{4}{c}{$z=5,\, s=2$}
            \\ \hline $n$ & $a=5$ & $a=10$ & $a=20$
            \\ \hline
            $5$ &8.36e-2 & 2.57e-3& 5.87e-5
            \\ \hline
            $10$ &2.82e-2& 2.89e-4 & 1.21e-7
            \\ \hline
            $15$ &1.13e-2& 1.23e-4 & 2.66e-9
            \\ \hline
        \end{tabular}
    \end{minipage}
    \vskip 8pt
    \begin{minipage}{0.48\textwidth}
        \centering
        \begin{tabular}{|c|c|c|c|}
            \multicolumn{4}{c}{$z= 2,\, s=2$}
            \\ \hline $n$ & $a=10+i$ & $a=30+i$ & $a=50+i$
            \\ \hline
            $5$ &1.60e-1& 3.67e-4 & 2.41e-5
            \\ \hline
            $10$ &9.14e-2& 1.11e-5 & 3.32e-8
            \\ \hline
            $15$ &5.92e-2& 3.62e-6 & 4.75e-10
            \\ \hline
        \end{tabular}
    \end{minipage}
    \hfill
    \begin{minipage}{0.48\textwidth}
        \centering
        \begin{tabular}{|c|c|c|c|}
            \multicolumn{4}{c}{$z=5,\, s=3$}
            \\ \hline $n$ & $a=10+i$ & $a=30+i$ & $a=50+i$
            \\ \hline
            $5$ &9.59e-3 & 2.52e-5& 1.91e-6
            \\ \hline
            $10$ &2.37e-3& 1.04e-8 & 5.78e-11
            \\ \hline
            $15$ &1.43e-3& 3.47e-11 & 1.35e-14
            \\ \hline
        \end{tabular}
    \end{minipage}
    \caption{
        The relative error in the approximation \eqref{eq:F:asy} for Lerch's transcendent
    }
    \label{table1}
\end{table}

\begin{figure}[ht]
    \small
    \centering
    \begin{minipage}{0.48\textwidth}
        \centering
        \includegraphics[width=\textwidth]{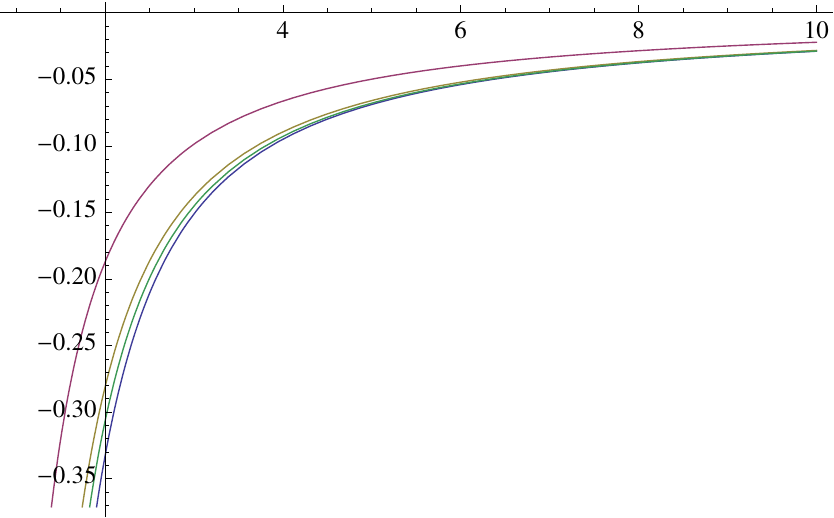}
        (i) $a=5$, $s=1$, $z\in[1,10]$, $n=1,3,5$.
    \end{minipage}
    \hfill
    \begin{minipage}{0.48\textwidth}
        \centering
        \includegraphics[width=\textwidth]{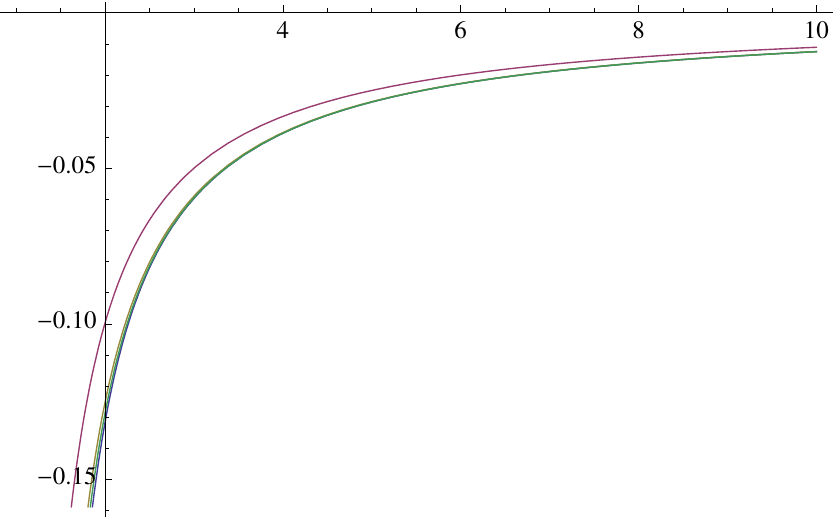}
        (ii) $a=10$, $s=1$, $z\in[1,10]$, $n=1,3,5$.
    \end{minipage}
    \vskip 10pt
    \begin{minipage}{0.48\textwidth}
        \centering
        \includegraphics[width=\textwidth]{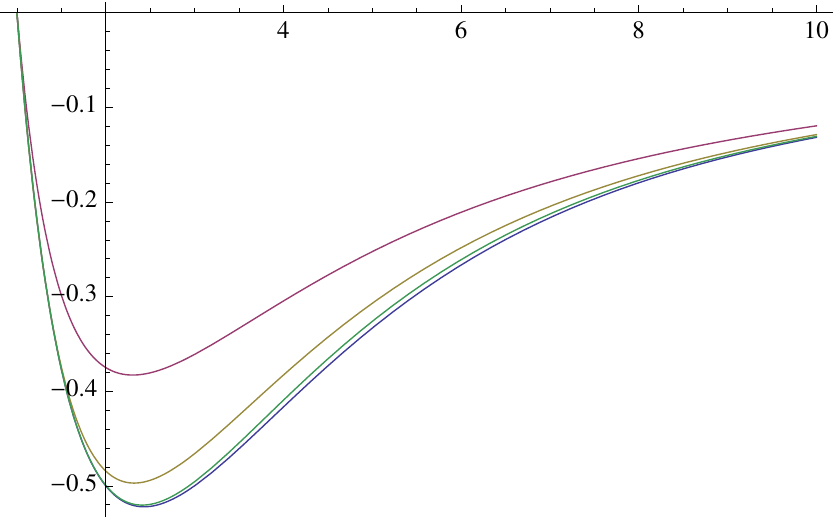}
        (iii) $z=2$, $s=1$, $a\in[1,10]$, $n=2,5,10$.
    \end{minipage}
    \hfill
    \begin{minipage}{0.48\textwidth}
        \centering
        \includegraphics[width=\textwidth]{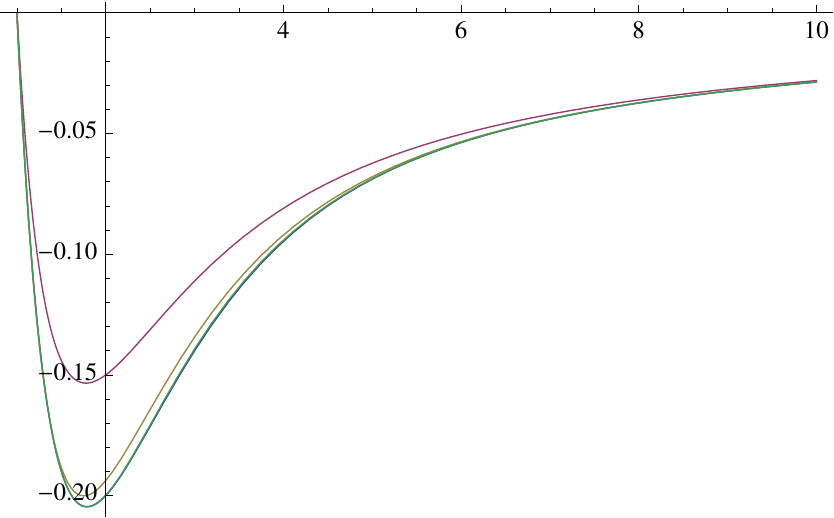}
        (iv) $z=5$, $s=1$, $a\in[1,10]$, $n=2,5,10$.
    \end{minipage}
  \caption{
  The blue line is the graphic of the function $F(z,s,a)$, whereas the red, gold and green functions represent the right 
  hand side of \eqref{eq:F:asy} for increasing values of the approximation order $n$.
  }
  \label{figure2}
\end{figure}

\end{section}


\end{document}